\newtheorem{theorem}{Theorem}
\newtheorem{lemma}[theorem]{Lemma}
\newtheorem{corollary}[theorem]{Corollary}
\newtheorem{fact}[theorem]{Fact}
\newtheorem{problem}[theorem]{Problem}
\theoremstyle{definition}
\newif\ifdeveloping
\newcommand{\mc}[1]{\mathcal{#1}}
\newcommand{\mbb}[1]{\mathbb{#1}}
\newcommand{\mf}[1]{\mathfrak{#1}}
\newcommand{\setm}{\setminus}
\newcommand{\empt}{\emptyset}
\newcommand{\subs}{\subset}
\newcommand{\oo}{{{\omega}_1}}
\newcommand{\dom}{\operatorname{dom}}
\def\<{\left\langle}
\def\>{\right\rangle}
\def\cf{\operatorname{cf}}
\def\br#1;#2;{\bigl[ {#1} \bigr]^ {#2} }
\newcommand{\force}{\Vdash}
 \newcommand{\cb}{\mbb C}
\newcommand{\supp}{\operatorname{supp}}
\newcommand{\stickT}{%
\setbox255=\hbox{\raise1ex\hbox{$\hspace{0.2pt}\,\bullet\,$}}
\mathord{\rlap{\hbox to\wd255{\hss\hbox{$|$}\hss}}
\box255}
}
\newcommand{\stickS}{%
\setbox255=\hbox{\raise0.6ex\hbox{$\scriptstyle\bullet$}}
\mathord{\rlap{\hbox to\wd255{\hss\hbox{$\scriptstyle|$}\hss}}
\box255}
}
\newcommand{\stick}{{\mathchoice{\stickT}{\stickT}{\stickS}{\stickS}}}
\author[L. Soukup]{Lajos Soukup}
\thanks
  {
   The author was supported by the
Hungarian National Foundation for Scientific Research grants no.
K 68262 and K 61600. }
\address
      { Alfr{\'e}d R{\'e}nyi Institute of Mathematics, 
        Hungarian Academy of Sciences}
\email{soukup@renyi.hu}
\urladdr{http://www.renyi.hu/$\sim$soukup}
\subjclass[2000]{54A25, 03E35, 54A35  }
\keywords{almost disjoint, refinement, chromatic number , cardinality spectrum, Cohen reals, }
\title{Dense families of countable sets below $\mf c$}
\date{\today}
\begin{document}
\begin{abstract}
We show that it is consistent that $2^\omega$ is as large as you wish, 
and for each uncountable cardinal $\kappa\le 2^\omega$,
there are a set $T\in \br \mbb R;\kappa;$ and a family $\mc A\subs \br T;\omega;$ 
with $|\mc A|=\kappa$ such that 
\begin{enumerate}[(a)]
 \item $|\overline{A}\cap T|=\kappa$ for each $A\in \mc A$,
\item for each $X\in \br T;\omega_1;$ there is $A\in \mc A$
with $A\subs X$,
\end{enumerate}
and so (i) there is an almost disjoint family 
$\mc B\subs \br \kappa;\omega;$ with  size and  chromatic number $\kappa$, 
(ii) there is a locally compact, locally countable $T_2$ space  with
cardinality spectrum $\{\omega,\kappa\}$. 
\end{abstract}
\maketitle

Answering a question of Erdős and Hajnal, it was proved by Elekes and Hoffman \cite{ElHo}
that for every cardinal $\kappa$ there is an almost disjoint family $\mc A\subs \br 2^\kappa;\omega;$
with $\chi(\mc A)>\kappa$.

Concerning almost disjoint subfamilies of  
$\br 2^\omega;\omega; $, Komjáth \cite{Ko1} obtained a  stronger  
result: {\em there is an 
almost disjoint family
$\mc A\subs \br 2^\omega;\omega;$ which refines $ \br 2^\omega;\omega_1;$}
(i.e. for each $X \in \br 2^\omega;\omega_1; $ there is $A \in  \mc A$ with $A \subs X$),
and so $\mc A=2^\omega$.
What happens below the continuum?
Under MA${}_{\aleph_1}$ or in the Cohen model, if  $\mc A\subs \br \omega_1;\omega;$ is an 
almost disjoint family of size $\omega_1 $ then there is an uncountable set $X\subs \omega_1 $ such that 
$A\setm X$ is infinite for each $A\in \mc A$. However, we can prove the following result:
\begin{theorem}\label{tm:almost}
It is consistent that $2^\omega$ is as large as you wish, 
and for each uncountable cardinal $\kappa\le 2^\omega$, there is an almost disjoint family 
$\mc B\subs \br \kappa;\omega;$ of size $\kappa$ 
which  refines $\br \kappa;\omega_1;$, 
and so the chromatic number of $\mc B$ is $\kappa$.
\end{theorem}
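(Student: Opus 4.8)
The plan is to blow up the continuum with Cohen reals over a carefully prepared ground model and, working in the extension, to build for each uncountable $\kappa\le 2^\omega$ a set $T=T_\kappa\in\br \mbb R;\kappa;$ together with a family $\mc A=\mc A_\kappa\subs\br T;\omega;$ satisfying (a) and (b) of the abstract; transporting along a bijection $T\cong\kappa$ then yields $\mc B$. I would first record the elementary reduction that, since the reals in $T$ are distinct, for $A,X\subs T$ one has $A\subs X$ iff the corresponding index sets satisfy containment; hence the topology of $\mbb R$ is irrelevant to (b), and the refinement of $\br T;\omega_1;$ by $\mc A$ is equivalent to the purely combinatorial statement that an almost disjoint family on $\kappa$ of size $\kappa$ refines $\br\kappa;\omega_1;$. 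The reals serve only to secure the large-closure condition (a), which is what the space in (ii) needs; for the present theorem the heart of the matter is this combinatorial refinement.

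The members of $\mc A$ would be produced by a finite-support ccc forcing assembled from Cohen conditions, which simultaneously adjoins the reals of $T$ and the sets $A_\xi$ ($\xi<\kappa$). To each $A_\xi$ I would attach a distinct branch (equivalently, fix a system of almost disjoint infinite subsets of $\omega$ and index the members along them) so that distinct members meet in a finite set; this makes the family almost disjoint by construction. To secure (a) I would choose each $A_\xi$ to be a countable dense subset of a perfect set, using that a generic set of $\kappa$ Cohen reals meets every such perfect set in a set of size $\kappa$, so that $|\overline{A_\xi}\cap T|=\kappa$.

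The crux is (b): every uncountable $X\subs T$ must contain some member of $\mc A$. Here the construction must genuinely depart from the naive Cohen model, since---as recalled before the statement---in the plain Cohen model no almost disjoint family of size $\omega_1$ refines $\br\omega_1;\omega_1;$. I would therefore build \emph{commitments} into the conditions: alongside the finite approximations each condition carries finitely many promises of the form ``the member $A_\xi$ will be placed inside any uncountable set conforming to this pattern.'' Given a name $\dot X$ for a set in $\br T;\omega_1;$ and a condition $p\force |\dot X|=\omega_1$, I would pass to a countable elementary submodel, apply a $\Delta$-system refinement to the supports of the conditions deciding membership in $\dot X$, and use the commitments to find $q\le p$ and an index $\xi$ with $q\force A_\xi\subs\dot X$. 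Verifying that the density of such capturing conditions is compatible with almost disjointness and with the ccc is, I expect, the main obstacle, and is exactly the point where the prepared ground model (rather than a model of CH) is used.

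Finally the consequences are routine. Almost disjointness and $|\mc B|=\kappa$ hold by construction, and refinement transfers to $\br\kappa;\omega_1;$ along $T\cong\kappa$. For the chromatic number, suppose $\kappa$ were colored with $\lambda<\kappa$ colors; since $\kappa$ is uncountable and $\lambda<\kappa$, we have $\lambda\cdot\omega<\kappa$, so some color class $C$ has $|C|\ge\omega_1$, and then refinement yields $A\in\mc B$ with $A\subs C$, i.e.\ a monochromatic edge. Thus no proper coloring uses fewer than $\kappa$ colors, while $\kappa$ colors trivially suffice, so $\chi(\mc B)=\kappa$. Carrying out the forcing as a single finite-support product (with $2^\omega$ set to the desired value) produces these families for all uncountable $\kappa\le 2^\omega$ simultaneously in one model.
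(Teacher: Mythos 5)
There is a genuine gap, in fact two. First, the heart of the theorem --- producing, for every uncountable $\kappa\le 2^\omega$ with $2^\omega$ large, \emph{some} $\kappa$-sized family of countable sets refining $\br \kappa;\omega_1;$ --- is exactly the step you do not carry out. Your ``commitments'' device is never specified: you do not say what a promise is, how it is honored when the conditions are extended, or why the resulting poset is ccc, and you yourself flag the compatibility of the capturing density with almost disjointness and the ccc as ``the main obstacle.'' That obstacle is the entire content of the theorem. The paper's solution is not a promise-style forcing at all but the product $P=\prod^*_{\mu}Fn(\omega,2)$ of \cite{FSSo}, whose ordering requires that a stronger condition properly extend only finitely many old nontrivial coordinates; the refining family then consists of \emph{ground-model} countable sets, and the capturing of an arbitrary uncountable $\dot A$ is Lemma \ref{lm:oo}, proved by a Fodor/uniformization argument on an $\omega_1$-sequence of deciding conditions followed by taking a union of $\omega$ of them (legal only because of the $\le_h/\le_v$ decomposition, not in the plain finite-support product). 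Without an actual poset and an actual density lemma, your argument does not establish consistency.

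Second, your opening ``reduction'' inverts the logic of the paper. You assert that condition (a) and the topology of $\mbb R$ are irrelevant to the present theorem and that (b) alone is ``equivalent'' to refinement by an \emph{almost disjoint} family. But the family $\mc A$ in $\Phi(\kappa)$ is not almost disjoint, and (a) is precisely what the paper uses to repair this: since $|\overline A\cap T|=\kappa=|\mc A|$ one can pick pairwise distinct limit points $x_A\in\overline A\cap T$ and replace each $A$ by a subsequence $B_A\subs A$ converging to $x_A$; sequences with distinct limits meet finitely, and $B_A\subs A\subs X$ preserves the refinement. Your substitute --- making the $A_\xi$ almost disjoint ``by construction'' by attaching branches --- is not a construction, and it is in tension with the refinement property (this is why the paper's final corollary has to invoke the Baumgartner--Hajnal--M\'at\'e theorem when only the combinatorial principle $\stick_\kappa=\kappa$ is assumed). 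Finally, your route to (a) via perfect sets fails as stated: a Cohen real generic over a model containing a code for a nowhere dense perfect set avoids that set, so a generic family of $\kappa$ Cohen reals need not meet $\overline{A_\xi}$ at all; in the paper, (a) is instead extracted from Lemma \ref{lm:dense}, which again exploits the finite-difference clause of the $\prod^*$ ordering.
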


Theorem 1 will follow easily from Theorem \ref{tm:phi} below.
However, 
to explain the topological origin of that result, first we should recall some definitions
and some results of Juhász and Weiss.

The {\em cardinality spectrum} $S(X)$ of any topological  space $X$ is the set of
cardinalities of all infinite closed subspaces of $X$.

The {\em density } of a family $\mc X$  of sets is defined as the cofinality of 
$\<\mc X,\supseteq\>$, i.e.
\begin{equation}\notag
d(\mc X)=\min\{|\mc A|:\mc A\subs \mc X\land \forall X\in \mc X \ 
(\exists A\in \mc A)\ A\subseteq X\}. 
\end{equation}
Denote $\mc C_\kappa$ the standard poset which introduces 
$\kappa$ Cohen reals, i.e. $\mc C_\kappa=Fn(\kappa,\omega;\omega)$. 

Juh\'asz and Weiss \cite{JW} proved the following theorems:
\begin{enumerate}[(JW1)]
 \item {\em If $\kappa \le 2^\omega$ and
$d(\br \kappa;{\omega_1};)=\kappa$ 
then there is a locally compact, locally countable $T_2$ space  $X$ with 
$S(X) =\{ \omega,\,\kappa \}$.}
\item {\em If $\cf(\kappa) = \omega$ and $\lambda^{\omega_1} < \kappa$
for $\lambda < \kappa$ then $V^{\mc C_\kappa}\models$ ``$d(\br \kappa;{\omega_1};)=\kappa$''.}
\end{enumerate}
It is unknown whether it is consistent that there are two cardinals
$\kappa\ne \kappa'<2^\omega$ with $\omega=\cf(\kappa)=\cf(\kappa')$
such that 
$d(\br \kappa;{\omega_1};)=\kappa$
and   $d(\br \kappa';{\omega_1};)=\kappa'$.

However, in (JW1) the full power of the assumption    
$d(\br \kappa;{\omega_1};)=\kappa$ is not needed.   
Given a cardinal $\kappa$ let us  say that the {\em principle $\Phi(\kappa)$} holds iff
there are a set $T\in \br \mbb R;\kappa;$ and a family $\mc A\subs \br T;\omega;$ 
with $|\mc A|=\kappa$ such that 
\begin{enumerate}[(a)]
 \item $|\overline{A}\cap T|=\kappa$ for each $A\in \mc A$,
\item for each $X\in \br T;\omega_1;$ there is $A\in \mc A$
with $A\subs X$. 
\end{enumerate}
It is easy to see that $d(\br \kappa;{\omega_1};)=\kappa$ implies 
$\Phi(\kappa)$, and  Juh\'asz and Weiss actually  proved the following statement: 
\begin{enumerate}[(JW1)]\addtocounter{enumi}{2}
 \item If $\kappa \le 2^\omega$ and $\Phi(\kappa)$ holds then 
there is a locally compact, locally countable $T_2$ space  $X$ with 
$S(X) =\{ \omega,\,\kappa \}$
\end{enumerate}

\begin{theorem}\label{tm:phi}
It is consistent that $2^\omega$ is as large as you wish, and
$\Phi(\kappa)$ holds for each uncountable $\kappa\le 2^\omega$. 
\end{theorem}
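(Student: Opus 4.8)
The plan is to obtain the model by adding Cohen reals. I would start in a ground model of GCH and force with $\mathcal C_\mu$ for the desired value $\mu = 2^\omega$ (taking $\cf(\mu)>\omega$, so that indeed $2^\omega=\mu$); since $\mathcal C_\mu$ is ccc, all cardinals and cofinalities are preserved. It then suffices to verify $\Phi(\kappa)$ inside $V^{\mathcal C_\mu}$ for every uncountable $\kappa\le\mu$, working with the Cohen reals in the first $\kappa$ coordinates and handling the remaining coordinates via the factorization $\mathcal C_\mu\cong\mathcal C_\kappa\times\mathcal C_\mu$ together with the ccc.

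For a fixed $\kappa$ I would build $T$ and $\mathcal A$ directly from the generic. Fix once and for all a Cantor set $P\subseteq\mathbb R$ with a fixed scheme of basic clopen pieces, arrange the Cohen coordinates as a matrix indexed by $\kappa\times\omega$, and use the $\alpha$-th column to produce a generic branch converging to a point $t_\alpha\in P$; let $T$ consist of these limit points together with the generic sequence points, distributed so that $T$ is $\kappa$-dense in every basic piece of $P$. The family $\mathcal A$ will be the collection of the (countably infinite) generic convergent sequences, indexed so that $|\mathcal A|=\kappa$.

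Property (a) should then be essentially built in: each $A\in\mathcal A$ is a generic sequence dense in some perfect piece $Q_A\subseteq P$, and since the $t_\alpha$ are spread $\kappa$-densely through $P$ we get $|\overline A\cap T|=|Q_A\cap T|=\kappa$. More importantly, the same genericity should force the concentration property $(\ast)$: every uncountable $X\subseteq T$ satisfies $|\overline X\cap T|=\kappa$. This is what reconciles (a) with (b): for $\kappa=\omega_1$ it is trivial since $|X|=\kappa$, but for $\kappa>\omega_1$ it is a genuine statement, to be secured by arranging that every $\omega_1$-sized family of generic branches condenses onto $\kappa$ of the $t_\alpha$.

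The main work, and the expected obstacle, is (b): that one family $\mathcal A$ of size $\kappa$ refines all of $[T]^{\omega_1}$. Given a name $\dot X$ for an uncountable subset of $T$ and a condition forcing $|\dot X|=\omega_1$, the strategy is the usual Cohen-model thinning — pick $\omega_1$ members with conditions forcing their membership, apply the $\Delta$-system lemma to the supports, and shrink so that the root-parts are constant, leaving $\omega_1$ mutually generic tails. The hard part is that no single finite condition can force an infinite set into $\dot X$, so the capture cannot be achieved by one condition; instead $\mathcal A$ must be defined so that the relevant member lies inside $X$ by genericity in $V^{\mathcal C_\mu}$, using the mutually generic tails to thread an entire generic sequence of $\mathcal A$ through $\dot X$. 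Two further points sharpen this crux: keeping $|\mathcal A|=\kappa$ although $\kappa^\omega$ may exceed $\kappa$ (so $\mathcal A$ cannot enumerate all candidate countable sets and must be defined generically), and making the argument go through for names that use coordinates beyond the first $\kappa$ (handled by the ccc, as any countable approximation to $\dot X$ lives on countably many coordinates). Relative to (JW2), the gain is precisely that we refine only by countable sets and are free to design $T\subseteq\mathbb R$; this is what should remove the hypotheses $\cf(\kappa)=\omega$ and $\lambda^{\omega_1}<\kappa$ and let the construction run for every uncountable $\kappa\le\mu$ at once.
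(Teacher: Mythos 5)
Your plan is built on the wrong forcing, and this is not a repairable technicality. Over a model of GCH, the standard Cohen poset $\mc C_\mu$ provably \emph{fails} to force $\Phi(\omega_1)$. Indeed, by ccc any family $\mc A\subseteq[T]^{\omega}$ of size $\omega_1$ in $V^{\mc C_\mu}$ (together with $T$ itself, for $\kappa=\omega_1$) already lies in $V[G\restriction I]$ for some $I$ of size $\omega_1$; a generic subset of $T$ added by an unused block of coordinates is then uncountable and, by a trivial density argument, has infinite complement in every member of $\mc A$, so clause (b) fails. This is exactly the observation recorded in the paper's introduction (``in the Cohen model, if $\mc A\subseteq[\omega_1]^{\omega}$ is an almost disjoint family of size $\omega_1$ then there is an uncountable $X$ with $A\setminus X$ infinite for each $A\in\mc A$''), and $\Phi(\omega_1)$ would contradict it via the proof of Theorem 1. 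The obstacle you yourself identify --- that no single finite condition can force an infinite set into $\dot X$ --- is not the hard step of the proof; it is the reason the statement is false for finite-support Cohen forcing, and no amount of ``defining $\mc A$ generically'' gets around it.

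The paper's actual proof replaces $\mc C_\mu$ by $P=\prod^*_{\mu}Fn(\omega,2)$, the Fuchino--Shelah--Soukup product in which conditions have \emph{countable} support but may properly decrease on only finitely many coordinates. This is what makes the capture argument work: given a name $\dot A$ for an uncountable subset of $\mu$, one builds a $\leq_h$-decreasing $\omega_1$-chain $(p_\nu)$ with vertical extensions $q_\nu$ deciding $\dot\alpha_\nu$, applies Fodor's lemma to the countable supports to stabilize the finitely many coordinates where $q_\nu$ differs from $p_\nu$, and then the union $q=\bigcup_n q_{\zeta_n}$ of countably many of these \emph{is a condition} forcing a prescribed countable ground-model set into $\dot A$ (Lemma \ref{lm:oo}); a separate density argument (Lemma \ref{lm:dense}) arranges clause (a) by copying a finite piece of the Cohen condition onto a fresh coordinate. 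Your remaining concerns are also handled differently from what you sketch: $|\mc A|=\kappa$ is secured by induction on $\kappa$ (for $\cf(\kappa)>\omega$ one takes all ground-model countable subsets, of which there are only $\kappa$ by GCH in $V$; for $\cf(\kappa)=\omega$ one decomposes), and $T$ is produced by an $\omega$-step closing-off inside $\mbb R$, not by a hand-built Cantor-set matrix. If you want to salvage your write-up, the essential change is the poset; the $\Delta$-system/mutual-genericity machinery for $\mc C_\mu$ cannot be made to yield clause (b).
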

So it is consistent that $2^\omega$ is as large as you wish, and 
for every $\kappa \le  2^\omega$ there is a locally countable and locally compact $T_2$
space $X$ with $S(X) = \{\omega, \kappa\}$.
Let us remark that Juh\'asz and Weiss proved that there is a simpler way 
to obtain that consistency: 
\begin{enumerate}[(JW1)]\addtocounter{enumi}{3}
 \item {\em Suppose $V\models$ ``GCH'' and $ \lambda>\omega $ is a cardinal in $V$ . 
Then, in $V^{\mc  C_\lambda}$ ,
for every $\kappa \le  2^\omega$ there is a locally countable and locally compact $T_2$
space $X$ with $S(X) = \{\omega, \kappa\}$.}
\end{enumerate}

To prove Theorem \ref{tm:phi}  we should recall some definition from
\cite{FSSo}.
 In that paper  a new kind of side-by-side 
product of posets was introduced.
 Let $X$ be any set and 
$\<P_i:i\in X\>$ be a 
family of posets. For $p\in\prod_{i\in X}P_i$ the {\em support} of $p$ is defined by 
$\supp(p)=\{i\in X:p(i)\not=1_{P_i}\}$. 
Let $\prod^*_{i\in X}P_i$ be 
the set
\begin{displaymath}
\{{p\in\prod_{i\in X}P_i}:|{\supp(p)}|\le {\omega}
\}
\end{displaymath}
with the partial ordering
\begin{displaymath}
\begin{array}{@{}l@{}l}
p\leq q\quad\Leftrightarrow\quad
	&p(i)\leq_i q(i)\mbox{ for all }i\in X\mbox{ and}\\
	&\{{i\in X}:p(i)<_i q(i)<_i 1_{P_i}\}
		\text{ is finite }.
\end{array}
\end{displaymath}

If $P_i=P$ for some poset $P$ for every $i\in X$, we shall write 
 $\prod^*_{X}P$.

For $p$, $q\in\prod^*_{i\in X}P_i$ the relation $p\leq q$ can be 
represented as a combination of the two other distinct relations which we 
shall call horizontal and vertical, and denote by $\leq_h$ and $\leq_v$ 
respectively: 
\begin{eqnarray}\notag
 p\leq_h q\quad\Leftrightarrow&\quad \supp(p)\supseteq\supp(q)\text{
   and }
p\restriction\supp(q)= q, \\\notag
p\leq_v q\quad\Leftrightarrow&\quad
p\le q\text{ and }
	\supp(p)=\supp(q).
\end{eqnarray}

\begin{fact}
$p\le q$ iff there is $r$ such that $p\leq_h r \leq_v q$  
iff there is $t$ such that $p\leq_v t \leq_h q$.
\end{fact}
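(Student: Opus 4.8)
The plan is to prove the two ``only if'' directions by exhibiting explicit intermediate conditions, and the two ``if'' directions by observing that both $\leq_h$ and $\leq_v$ are contained in $\le$, so that each factorization collapses under transitivity of $\le$. First I would record that $p\leq_v t$ gives $p\le t$ by definition, while $p\leq_h r$ gives $p\le r$ because $p$ and $r$ agree on $\supp(r)$ and $r(i)=1_{P_i}$ on $\supp(p)\setm\supp(r)$, so the set $\{i:p(i)<_i r(i)<_i 1_{P_i}\}$ is in fact empty. Granting transitivity of $\le$, the implications $p\leq_h r\leq_v q\Rightarrow p\le q$ and $p\leq_v t\leq_h q\Rightarrow p\le q$ then follow at once.

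For the forward directions, assume $p\le q$ and note first that $\supp(q)\subs\supp(p)$: if $i\in\supp(q)$ then $p(i)\le_i q(i)<_i 1_{P_i}$, so $p(i)\neq 1_{P_i}$. To factor as $p\leq_h r\leq_v q$, I would put $r(i)=p(i)$ for $i\in\supp(q)$ and $r(i)=1_{P_i}$ otherwise; then $\supp(r)=\supp(q)$, the agreement of $r$ and $p$ on $\supp(r)$ yields $p\leq_h r$, and $r\le q$ together with $\supp(r)=\supp(q)$ yields $r\leq_v q$. To factor as $p\leq_v t\leq_h q$, I would instead put $t(i)=q(i)$ for $i\in\supp(q)$, $t(i)=p(i)$ for $i\in\supp(p)\setm\supp(q)$, and $t(i)=1_{P_i}$ otherwise; then $\supp(t)=\supp(p)$, the agreement of $t$ and $q$ on $\supp(q)$ yields $t\leq_h q$, and $p\le t$ together with $\supp(p)=\supp(t)$ yields $p\leq_v t$. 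Intuitively each factorization just performs the ``lowering on $\supp(q)$'' and the ``enlarging of support'' steps in the two possible orders, the intermediate condition recording the halfway stage.

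The one genuinely delicate point throughout is the finiteness clause defining $\le$, and this is where I expect the only real work to lie. For transitivity I would argue that if $p\le r\le q$ and $p(i)<_i q(i)<_i 1_{P_i}$, then at least one of $p(i)<_i r(i)$ and $r(i)<_i q(i)$ holds, and whichever it is places $i$ into one of the two a priori finite sets $\{i:p(i)<_i r(i)<_i 1_{P_i}\}$ and $\{i:r(i)<_i q(i)<_i 1_{P_i}\}$; hence their union bounds the offending set and $\le$ is transitive. The same bookkeeping shows that the middle sets needed for $r\le q$ and for $p\le t$ in the two constructions above both reduce to $\{i\in\supp(q):p(i)<_i q(i)<_i 1_{P_i}\}$, a subset of the finite set $\{i:p(i)<_i q(i)<_i 1_{P_i}\}$. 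Everything remaining is a routine coordinatewise verification of supports and inequalities.
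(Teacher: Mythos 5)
Your proof is correct. The paper states this Fact without proof, and your argument (the two explicit intermediate conditions $r$ and $t$, the observation that $\leq_h$ and $\leq_v$ are both contained in $\le$, transitivity of $\le$ via the union of the two finite ``strictly between'' sets, and the reduction of the finiteness clauses for $r\le q$ and $p\le t$ to the finite set $\{i: p(i)<_i q(i)<_i 1_{P_i}\}$) is exactly the routine verification the author leaves to the reader.
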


Instead of theorem \ref{tm:phi} we prove the following result.

\begin{theorem}\label{tm:phi2}
Assume GCH and let ${\mu}>{\omega}$ be any regular cardinal.
Let $P=\prod^*_{\mu}  Fn({\omega},2)$. Then forcing with $P$ preserves
cardinals
and cofinalities, $(2^{\omega})^{V^P}={\mu}$, and
\begin{equation}\notag
V^P\models 
\text{$\Phi(\kappa)$ holds for each $\omega_1\le{\kappa}\le 2^{\omega}$}.
\end{equation}
\end{theorem}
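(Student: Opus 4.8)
The plan is to split the work along the horizontal/vertical decomposition of Fact~1. First I would show that $P$ is $\aleph_2$-c.c. Given $\aleph_2$ conditions, their supports are countable, so by GCH (which gives $\aleph_1^{\aleph_0}=\aleph_1$) the $\Delta$-system lemma yields $\aleph_2$ of them forming a $\Delta$-system with a countable root $R$. Since $\prod_{i\in R}Fn(\omega,2)$ has only $2^{\aleph_0}=\aleph_1$ elements, $\aleph_2$ of these conditions agree on $R$; any two of them are then compatible, by gluing (they coincide on $R$ and have disjoint supports off $R$, so the set of coordinates where the common extension strictly refines either of them is empty). Hence $P$ is $\aleph_2$-c.c., which preserves all cardinals and cofinalities $\ge\aleph_2$ and bounds $2^\omega$ from above. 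It remains to preserve $\omega_1$, for which I would prove that $P$ is proper by the standard fusion for such products: given a countable $N\prec H_\theta$ and $p\in N\cap P$, build an $(N,P)$-generic $q\le p$ with $\supp(q)=N\cap\mu$, using $\le_h$-extensions (which form a $\sigma$-closed order, so limits of countable descending sequences exist) to activate the coordinates of $N\cap\mu$, and $\le_v$-extensions (which move only finitely many coordinates, hence behave like finite-support Cohen forcing) to meet the dense sets of $N$ one at a time. Properness preserves $\omega_1$, completing the proof that all cardinals and cofinalities are preserved.

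\textbf{The continuum.} For the lower bound, the coordinate generics $c_i$ ($i<\mu$) are pairwise distinct reals by a density argument, so $(2^\omega)^{V^P}\ge\mu$. For the upper bound I would first note that $|P|=\mu$ (there are $\mu^{\aleph_0}=\mu$ countable supports and $\aleph_1$ conditions on each), and then count nice names: by $\aleph_2$-c.c. every real has a nice name built from $\aleph_1$-sized antichains, giving at most $\mu^{\aleph_1}=\mu$ of them once $\mu\ge\aleph_2$. To cover every regular $\mu>\omega$ uniformly (in particular $\mu=\aleph_1$, where $\mu^{\aleph_1}=\aleph_2$), I would instead argue that every real of $V^P$ is already added by some countable subproduct $\prod^*_{S}Fn(\omega,2)$ with $S\in([\mu]^{\aleph_0})^V$ — a capturing fact that follows from the fusion above — and that each such subproduct, being of size $\aleph_1$, adds at most $\aleph_1$ reals; since there are only $\mu$ such $S$, this yields $(2^\omega)^{V^P}\le\mu$.

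\textbf{$\Phi(\kappa)$.} Fix $\omega_1\le\kappa\le\mu$ and put $T=\{c_i:i<\kappa\}$, a set of $\kappa$ distinct reals. Every $A\in[T]^\omega$ is $\{c_i:i\in e\}$ for a unique $e\in[\kappa]^\omega$, and every $X\in[T]^{\omega_1}$ is $\{c_i:i\in Y\}$ for a unique $Y\in[\kappa]^{\omega_1}$, so (a) and (b) translate into a statement purely about index sets: I must produce $\mathcal E\subs[\kappa]^{\omega}$ with $|\mathcal E|=\kappa$ such that (a) $\{c_i:i\in e\}$ is dense in $2^\omega$ for each $e\in\mathcal E$ (so $\overline A\supseteq 2^\omega$ and $|\overline A\cap T|=\kappa$), and (b) every $Y\in[\kappa]^{\omega_1}$ contains some $e\in\mathcal E$. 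Here the countable supports are essential: a single condition may fix infinitely many coordinates at once (setting $c_i(0)=1$ on a whole countable set of fresh coordinates is a legitimate $\le_h$-extension), which is exactly what fails for finite-support Cohen forcing and is why the analogue of (b) fails there for almost disjoint families. Using this I would prove a capturing lemma: for every $P$-name $\dot Y$ forced uncountable and every condition, a fusion produces a countable $e$, drawn from a prescribed reservoir, together with an extension forcing $e\subs\dot Y$; along the fusion I can simultaneously refine the coordinates $c_i$ ($i\in e$) to extend a fixed enumeration of $2^{<\omega}$, so that $\{c_i:i\in e\}$ is dense and the same $e$ serves (a).

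\textbf{The main obstacle} is to carry out this capturing while keeping $|\mathcal E|=\kappa$, since for $\kappa<\mu$ the set $[\kappa]^\omega$ has size $\mu>\kappa$ in $V^P$ and there are $\mu$ sets $Y$ to catch. I would organize $\mathcal E$ by recursion on $\kappa$: when $\cf(\kappa)=\omega$, writing $\kappa=\sup_n\kappa_n$ and taking $\mathcal E=\bigcup_n\mathcal E_{\kappa_n}$ works, because an uncountable $Y$ meets some $\kappa_n$ in an uncountable set already caught at stage $\kappa_n$. The genuinely hard case is $\cf(\kappa)\ge\omega_1$, where this decomposition is unavailable and the reflection method behind (JW2) — which needs $\cf(\kappa)=\omega$ — does not apply; handling it, i.e. producing a single family of size $\kappa$ that simultaneously catches all uncountable index sets and yields dense closures, is where the full strength of the countable-support product $\prod^*$ must be used, and is the technical heart of the theorem.
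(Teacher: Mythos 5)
Your frame for clause (b) --- a Fodor-based fusion that captures a countable index set inside every uncountable one, plus the $\cf(\kappa)=\omega$ decomposition --- is indeed the paper's skeleton, but you stop exactly where the proof has to be given: you declare the case $\cf(\kappa)\ge\omega_1$ ``the technical heart'' and supply no argument for it. The missing idea is not a cleverer organization of $\mathcal E$ but the observation that the set the fusion captures lies in the \emph{ground model}. Concretely (Lemma~\ref{lm:oo}): from a name $\dot A$ for an uncountable subset of $\mu$ one builds $p_\nu$ ($\leq_h$-decreasing) and $q_\nu\leq_v p_\nu$ deciding $\dot\alpha_\nu$, with $q_\nu$ allowed to differ from $p_\nu$ only on coordinates in $\supp(p)\cup\bigcup_{\zeta<\nu}\supp(q_\zeta)$; Fodor's lemma stabilizes that finite difference set, countability of $Fn(\omega,2)$ stabilizes the values, and the union of an $\omega$-chain of surviving $q_\nu$'s is a single condition forcing $I=\{\alpha_{\zeta_n}:n<\omega\}\subseteq\dot A$ with $I\in V$. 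Hence $\mathcal A_I=\{A\in[I]^\omega\cap V: |\overline{\mathbb C_A}\cap\mathbb C|=\mu\}$ already refines $[I]^{\omega_1}$, and GCH gives $|[I]^\omega\cap V|=(\kappa^\omega)^V=\kappa$ precisely when $\cf(\kappa)>\omega$ --- so the case you call genuinely hard is the one needing no recursion at all (Lemma~\ref{lm:a}), and your $\cf(\kappa)=\omega$ decomposition handles the rest.

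Your plan for clause (a) also does not go through as stated. You want to arrange, along the fusion, that $c_{\alpha_{\zeta_n}}$ extends the $n$-th element of $2^{<\omega}$, so that $\mathbb C_e$ is dense in $2^\omega$. But the pressing-down requirement above forbids strictly extending $q_\nu$ at the fresh coordinate $\alpha_\nu$ (it need not lie in $\supp(p)\cup\bigcup_{\zeta<\nu}\supp(q_\zeta)$), and after the fusion one cannot strictly extend infinitely many coordinates in a single step of $\prod^*$. The paper arranges only the weaker fact that $q(\alpha_{\zeta_n})$ equals one fixed $c\in Fn(\omega,2)$ for all $n$, and then invokes a swapping lemma (Lemma~\ref{lm:dense}): a condition putting the same $c$ on every coordinate of $A\cup\{\beta\}$ forces $c_\beta\in\overline{\{c_\alpha:\alpha\in A\}}$, since any extension deciding a neighborhood of $c_\beta$ still leaves infinitely many $\alpha\in A$ untouched and the value at $\beta$ can be copied onto one of them. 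This yields $|\overline{\mathbb C_A}\cap\mathbb C|=\mu$ rather than $\overline{\mathbb C_A}=2^\omega$, and clause (a) is then secured by a device absent from your proposal: $T$ is not $\{c_i:i<\kappa\}$ but the union of an $\omega$-chain $T_0=\kappa$, $T_{n+1}=T_n\cup\bigcup\{Y_A:A\in\mathcal A_{T_n}\}$ with each $Y_A$ a $\kappa$-sized subset of $\overline{\mathbb C_A}\cap\mathbb C$. Your treatment of cardinal preservation and of $(2^\omega)^{V^P}=\mu$ is reasonable (the paper simply cites these facts from \cite{FSSo}), but the two gaps above are the substance of the theorem.
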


\begin{proof}[Proof of Theorem \ref{tm:phi2}]
By \cite[Corollary 2.4(a)]{FSSo} 
forcing with $P$ preserves every cardinal.
Since $P$ satisfies ${\omega}_2$-c.c. by \cite[Corollary
  2.4(a)]{FSSo},
we have $( 2^{\omega})^{V^P}\le ((|P|^{\oo})^{\omega})^V={\mu}$.

For ${\alpha}<{\mu}$, let ${\dot c}_{\alpha}$ be the 
$P$-name of the generic function 
from $\omega$ to $2$ added by the ${\alpha}$-th copy of $Fn(\omega,2)$ in 
$P$. Since the functions $\{c_{\alpha}:{\alpha}<{\mu}\}$ are pairwise
distinct  we have $(2^{\omega})^{V^P}={\mu}$.

Let $\cb=\{c_{\alpha}:{\alpha}<{\mu}\}$, and 
for $X\subs {\mu}$ write $\cb_X=\{c_{\xi}:{\xi}\in X\}$.

\begin{lemma}\label{lm:dense}
If  $c\in Fn({\omega},2)\setm \{\empt\}$,  $q\in P$,
$A\in \br {\mu};{\omega};$, ${\beta}\in {\mu}$ such that   
$q({\alpha})=c$ for each ${\alpha}\in A\cup \{{\beta}\}$ then 
\begin{displaymath}
q\force {\dot c}_{\beta}\in \overline{\{{\dot c}_{\alpha}:{\alpha}\in A\}}. 
\end{displaymath}
\end{lemma}

\begin{proof}
Assume that $q'\le q$, $\varepsilon\in Fn(\omega,2)$ and 
$q'\force  \dot c_\beta\in [\varepsilon]$. Then 
$\varepsilon\subs q'(\beta)$.
Since $|A|=\omega$, there is $\alpha\in A$ such that $q'(\alpha)=q(\alpha)=c$.
Define the condition $q''$ as follows:
\begin{equation}\notag
q''(\gamma)=\left\{
\begin{array}{ll}
q'(\beta)&\text{$\gamma=\alpha$,}\\
q'(\gamma)&\text{otherwise}.
\end{array}
\right. 
\end{equation} 
Then $q''$ is  a condition, moreover $q''\le q'$ because
$q''(\alpha)=q'(\beta)\supset q(\beta)=c=q'(\alpha)$.
Since
$ q''\force \varepsilon\subset \dot c_\alpha$, i.e.
 $ q''\force  \dot c_\alpha\in [\varepsilon]$, the Lemma is proved.
\end{proof}

To construct $T\in \br \mbb R;\kappa;$ and $\mc A\subs \br T;\omega; $
witnessing  $\Phi(\kappa)$ we need the following
lemma: 

\begin{lemma}\label{lm:oo}
\begin{displaymath}
V^P\models\ \forall A
\in \br {\mu};\oo;\ (\exists
I\in \br A;{\omega};\cap V) \
(\forall X\in \br \mu;\oo;)\ 
\overline{\cb_I}\cap \cb_X\ne\empt. 
\end{displaymath}
\end{lemma}

\begin{proof}[Proof of the lemma]
Assume that {\em $p\force $ ``$\dot A=\{{\dot {\alpha}}_{\nu}:{\nu}<\oo\}$''}.

Let $(p_\nu)_{\nu<\omega_1}$, $(q_\nu)_{\nu<\omega_1}$ be 
sequences of elements of $P$ and  
$(\alpha_{\nu})_{\nu<\omega_1}$ be a sequence of 
ordinals $<\omega_1$ such that 
\begin{enumerate}[(a)]
\item $p_0\leq p$ and $(p_\nu)_{\nu<\omega_1}$ is a descending 
sequence with respect to $\leq_h$;
\item \label{b} $q_\nu\leq_v p_\nu$ and 
$q_\nu\Vdash{\dot {\alpha}}_{\nu}={\check\alpha_\nu}$ 
for all $\nu<\omega_1$;
\item ${\alpha}_{\nu}\in \supp(q_{\nu})$;
\item \label{d}  $p_\nu\restriction S_\nu=q_\nu\restriction S_\nu$ for every 
$\nu<\omega_1$ where
\[ S_\nu=
\supp(q_\nu)\setminus(\supp(p)\cup\bigcup_{{\zeta}<\nu}\supp(q_{\zeta})).
\]\noindent
For $\nu<\omega_1$ let 
$u_\nu=
\{{\beta\in\supp(q_\nu)}:{q_\nu(\beta)\not=p_\nu(\beta)}\}$,   
\end{enumerate}
and $n_{\nu}=\dom q_{\nu}({\alpha}_{\nu})$.

 For $\nu<\omega_1$ let 
$d_\nu=\bigcup_{\beta<\nu}\supp(q_\beta)$. 
Then $(d_\nu)_{\nu<\omega_1}$ is a continuously increasing sequence 
in $\br {\mu};{\omega};$. 
Let
$u_\nu=\{{\beta\in\supp(q_\nu)}:{q_\nu(\beta)\not=p_\nu(\beta)}\}$ 
for $\nu<\omega_1$. Then  $u_\nu$ is finite  by (\ref{b}) and    
$u_\nu\subset d_\nu$ by (\ref{d}). Hence, by Fodor's lemma, there exists an 
uncountable  $Y\subseteq\omega_1$ such that  
$u_\nu=u^*$ for all $\nu\in Y$, for some fixed 
$u^*\in \br {\mu};{<{\omega}};$.

 Since $Fn({\omega},2)$ is countable, 
there exists an uncountable 
$Y'\subseteq Y$ such that  ${q_\nu\restriction u^*}=q^*$,
and $q_{\nu}({\alpha}_{\nu})=c$ 
for each ${\nu\in Y'}$. 

Let $Z\subs Y'$ with order type ${\omega}$. Write 
$Z=\{{\zeta}_n:n<{\omega}\}$ and
put $q=\cup \{q_{{\zeta}_n}:n<{\omega}\}$.
Then $q\leq_h q_{{\zeta}_n}$. Thus
\begin{displaymath}
q\force \{{\check\alpha}_{{\zeta}_n}:n<{\omega}\}\subs \dot A.  
\end{displaymath}
Pick $\beta\in X\setm \supp(q)$, and 
define the condition $q'$ as follows:
$\supp(q')=\supp (q)\cup\{\beta\}$, and 
\begin{equation}\notag
q'(\gamma)=\left\{
\begin{array}{ll}
c&\text{$\gamma=\alpha$,}\\
q(\gamma)&\text{otherwise}.
\end{array}
\right. 
\end{equation} 
Then $q'\le q$ is  a condition, moreover
\begin{displaymath}
q\force \dot c_\beta\in \overline{\{c_{{\alpha}_{{\zeta}_n}}:n<{\omega}\}} 
\end{displaymath}
by lemma \ref{lm:dense}.
\end{proof}

\begin{lemma}\label{lm:a}
For each 
${\omega}<{\kappa}<{\mu}$ and 
$I\in \br {\mu};{\kappa};$ there is 
$\mc {A}_I\subs \br I;{\omega};$
such that 
\begin{enumerate}[(i)]
\item    $|\mc {A}_I|\le{\kappa}$,
\item $\mc {A}_I$ refines $\br I;\oo;$,
\item $|\overline{\cb_A}\cap \cb|={\mu}$
for each $A\in \mc {A}_I$.
\end{enumerate}
\end{lemma}

\begin{proof}
By induction on ${\kappa}$.
If $cf({\kappa})>{\omega}$ then 
let
\begin{displaymath}
 \mc {A}_I=\{A\in \br I;{\omega};\cap V:|\overline{\cb_A}\cap \mathbb C|={\mu}\}. 
\end{displaymath}
Then $|\mc {A}|\le({\kappa}^{\omega})^V={\kappa}$.
(iii) holds just by the construction of $\mc {A}$.
(ii) holds by Lemma \ref{lm:oo}.

If $cf({\kappa})={\omega}$ then let 
$I=\bigcup\{I_n:n<{\omega}\}$ with $|I_n|<{\kappa}$, 
and put $\mc {A}_I=\bigcup\{\mc {A}_{I_n}:n<{\omega}\}$.
\end{proof}

We are ready to proof theorem \ref{tm:phi2}.
Clearly $T=\mbb R$   and $\mc A=\{A\in \br \mbb R;\omega;:\text{$A$ is crowded}\}$    witness
$\Phi(2^\omega)$ as it was observed by Komjáth \cite{Ko1}.

So we can consider an uncountable
cardinal ${\kappa}<2^{\omega}$. 
Define  an increasing sequence 
$\<T_n:n<\omega\>\subs \br {\mu};{\kappa};$
as follows.
Let $T_0={\kappa}$. 
If $T_n$ is defined consider the family
$\mc A_{T_n}\subs \br T_n;\omega;$ defined in Lemma \ref{lm:a}.
For each $A\in \mc A_{T_n}$ choose a set 
$Y_A\subs {\cb_A}$ of size $\kappa$,
and let $T_{n+1}=T_n\cup\bigcup\{Y_A:A\in \mc A_{T_n}\}$.

Then $T=\cup\{T_n:n<\omega\}$ and $\mc A=\cup\{\mc A_{T_n}:n<\omega\}$ work.
\end{proof}

\begin{proof}[Proof of Theorem \ref{tm:almost}]
Let $T\subs \br \mbb R;\kappa;$ and $\mc A\subs \br T;\omega;$ witness 
$\Phi(\kappa)$.
Pick distinct points $\{x_A\in T\cap \overline{A} :A\in \mc A\}$.
Since $|\mc A|=|\overline{A}\cap T|=\kappa$ for each $A\in \mc A$, we can do that.  
Choose a sequence  $B_A\in \br A;\omega;$ which  converges to $x_A$. 
Since $x_A\ne x_{A'}$ for $A\ne A'$, we have $|B_A\cap B_{A'}|<\omega$.
So $\mc B=\{B_A:A\in \mc A\}$ satisfies the requirements.
\end{proof}

In \cite{FSSo} for an uncountable cardinal $\lambda$ we defined
\begin{equation}\notag
\stick_\lambda=\min
\{|\mc X|:\mc X\subseteq\br \kappa;\omega;
\text{ s.t.  }
\forall y\in \br \kappa;\omega_1;\:\exists x\in X\;x\subseteq y
\}
\end{equation}
Although it was not explicitly proved in \cite{FSSo} we knew the following result:\\
{\em Assume GCH and let ${\mu}>{\omega}$ be any regular cardinal.
Then forcing with $P=\prod^*_{\mu} Fn({\omega},2)$ preserves
cardinals
and cofinalities, $(2^{\omega})^{V^P}={\mu}$, and
\begin{equation}\notag
V^P\models 
\text{$\stick_\kappa=\kappa$  for each $\omega_1\le{\kappa}\le 2^{\omega}$}.
\end{equation}
}\\
Actually, the family  $\mc A=\{A\in \br \kappa;\omega;\cap V :\sup A<\kappa\}$
witnesses $\stick_\kappa=\kappa$. Indeed, $|\mc A|=\kappa$ since GCH holds, and 
the proof of Lemma \cite[3.7]{FSSo} contains the argument which yields that 
$\mc A$ refines $\br \kappa;\omega_1;$.
It is clear that $\Phi(\kappa)$ implies $\stick_\kappa=\kappa$, but the 
reserve implication is unknown.
\begin{problem}
Is it true that   $\stick_\kappa=\kappa$ implies $\Phi(\kappa)$
for $\omega_1\le \kappa< 2^\omega$?
\end{problem}

Finally we remark that the assumption $\stick_\kappa=\kappa$ is enough to prove
Theorem \ref{tm:almost}.
Indeed,
for $\omega\le\kappa<2^\omega$ the Fr\'echet ideal $\mc I=\br \kappa;<\omega;$ is 
nowhere $\kappa^+$-saturated, so by   
\cite[Theorem 2.1]{BaHaMa}
every family $\mc D\subs \br \kappa;\omega;$ of size $\kappa$
is refined by an almost disjoint family.
Thus
\begin{corollary}
If $\stick_\kappa=\kappa$ for some $\kappa<2^\omega$
then some almost disjoint family $\mc B\subs \br \kappa;\omega;$ refines
$\br \kappa;\omega_1;$. 
\end{corollary}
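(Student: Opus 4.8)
The plan is to obtain the desired family by refining twice: first extract from $\stick_\kappa=\kappa$ a dense family of size $\kappa$, then thin it to an almost disjoint family by means of a known refinement theorem for non-saturated ideals. To start, I would simply unfold the definition. Since $\stick_\kappa=\min\{|\mc X|:\dots\}=\kappa$ and the minimum of a set of cardinals is attained, there is a family $\mc D\subs\br\kappa;\omega;$ with $|\mc D|=\kappa$ such that for every $y\in\br\kappa;\omega_1;$ there is some $D\in\mc D$ with $D\subs y$. In other words $\mc D$ refines $\br\kappa;\omega_1;$, but without any disjointness.

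The heart of the argument is to check that the Fr\'echet ideal $\mc I=\br\kappa;<\omega;$ is nowhere $\kappa^+$-saturated. Its positive sets are the infinite subsets of $\kappa$, and two positive sets are disjoint modulo $\mc I$ exactly when they are almost disjoint, so the required property says that every infinite $A\subs\kappa$ contains $\kappa^+$ pairwise almost disjoint infinite subsets. This is the one place the hypothesis $\kappa<2^\omega$ is used: a countable set splits into $2^\omega$ pairwise almost disjoint infinite pieces, and $\kappa<2^\omega$ gives $\kappa^+\le 2^\omega$, so enough such pieces already live inside any infinite $A$. With this in hand I would quote \cite[Theorem 2.1]{BaHaMa}, which says that over a nowhere $\kappa^+$-saturated ideal every family of positive sets of size at most $\kappa$ is refined by an almost disjoint family; applied to $\mc D$ it produces an almost disjoint $\mc B\subs\br\kappa;\omega;$ with the property that each $D\in\mc D$ contains some $B\in\mc B$.

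It remains only to compose the two refinements. Given any $y\in\br\kappa;\omega_1;$, pick $D\in\mc D$ with $D\subs y$ and then $B\in\mc B$ with $B\subs D$; then $B\subs y$, so the almost disjoint family $\mc B$ refines $\br\kappa;\omega_1;$, as required. I expect no real obstacle here: the genuine combinatorial content is absorbed by the cited theorem, and the only thing to verify by hand is the nowhere-saturation of $\mc I$, which is precisely the elementary fact that a countable set has $2^\omega\ge\kappa^+$ almost disjoint subsets.
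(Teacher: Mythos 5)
Your proof is correct and follows essentially the same route as the paper: extract a dense family $\mc D$ of size $\kappa$ from $\stick_\kappa=\kappa$, use $\kappa<2^\omega$ to see that the Fr\'echet ideal is nowhere $\kappa^+$-saturated, refine $\mc D$ to an almost disjoint family via \cite[Theorem 2.1]{BaHaMa}, and compose the refinements. The only difference is that you spell out the nowhere-saturation verification, which the paper leaves implicit.
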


\end{document}